\newtheorem{theorem}{Theorem}[section]
\newtheorem{lemma}[theorem]{\bf Lemma}
\newtheorem{observation}[theorem]{\bf Observation}
\def\Z{\mathbb{Z}}
\newcommand{\qed}{\hfill$\square$\bigskip}
\newenvironment{proof}[1][]%
{\ifthenelse{\equal{#1}{}}{\noindent\textit{Proof.
    }}{\noindent\textit{#1. }}}%
{\qed}
\title{A note on counting flows in signed graphs}
\author{%
   Matt DeVos\thanks{%
     Email: {\tt mdevos@sfu.ca}. 
     Math Dept., Simon Fraser University, Burnaby, British Columbia, Canada. 
     Supported by an NSERC Discovery Grant (Canada).}
 \and
   Edita Rollov\'{a}\thanks{%
  Email: {\tt rollova@ntis.zcu.cz}. 
  European Centre of Excellence, NTIS – New Technologies for Information Society, Faculty of Applied Sciences, University of West Bohemia, Pilsen. 
  Partially supported by project GA14-19503S of the Czech Science Foundation. 
  Partially supported by project LO1506 of the Czech Ministry of Education, Youth and Sports. 
}
 \and
   Robert \v{S}\'{a}mal\thanks{%
  Email: {\tt samal@iuuk.mff.cuni.cz}. 
  Computer Science Institute of Charles University, Prague. 
  Partially supported by grant GA \v{C}R P202-12-G061.
  Partially supported by grant LL1201 ERC CZ of the Czech Ministry of Education, Youth and Sports.
}
}
\date{}
\let\phi\varphi 
\begin{document}

\maketitle

\begin{abstract}
Tutte initiated  the study of nowhere-zero flows and proved the following fundamental theorem:  For every graph $G$ there is a polynomial $f$ so that
for every abelian group $\Gamma$ of order $n$, the number of nowhere-zero $\Gamma$-flows in $G$ is $f(n)$.  For signed graphs (which have bidirected
orientations), the situation is more subtle.  For a finite group~$\Gamma$, let $\epsilon_2(\Gamma)$ be the largest integer $d$ so that $\Gamma$ has a
subgroup isomorphic to~$\mathbb{Z}_2^d$.  We prove that for every signed graph $G$ and $d \ge 0$ there is a polynomial $f_d$ so that $f_d(n)$ is the
number of nowhere-zero $\Gamma$-flows in $G$ for every abelian group~$\Gamma$ with $\epsilon_2(\Gamma) = d$ and $|\Gamma| = 2^d n$. Beck and
Zaslavsky~\cite{BZ06} had previously established the special case of this result when $d=0$ (i.e., when $\Gamma$ has odd order).  
\end{abstract}


\section{Introduction}
Throughout the paper we permit graphs to have both multiple edges and loops.  Let $G$ be a graph equipped with an orientation of its edges and let $\Gamma$ be an abelian
group written additively.  We say that a function $\phi : E(G) \rightarrow \Gamma$ is a $\Gamma$-\emph{flow} if it satisfies the following equation (Kirchhoff's law) for
every vertex $v \in V(G)$.    
\[ \sum_{e \in \delta^+(v) } \phi(e) - \sum_{e \in \delta^-(v)} \phi(e) = 0, \]
where $\delta^+(v)$ ($\delta^-(v))$ denote the set of edges directed away from (toward) the vertex $v$.
We say that $\phi$ is \emph{nowhere-zero} if $0 \not\in \phi(E(G))$.  
If $\phi$ is a $\Gamma$-flow and we switch the direction of an edge $e$ of $G$, we may obtain a
new flow by replacing~$\phi(e)$ by its additive inverse. Note that this does not affect the property of being nowhere-zero
.  So, in particular, whenever some
orientation of $G$ has a nowhere-zero $\Gamma$-flow, the same will be true for every orientation.  More generally, the number of nowhere-zero $\Gamma$-flows 
in two different orientations of $G$ will always be equal, and we denote this important quantity by $\Phi(G,\Gamma)$. 
Tutte~\cite{Tutte54} introduced the concept of a nowhere-zero $\Gamma$-flow and proved the following key theorem about counting them.

\begin{theorem}[Tutte~\cite{Tutte54}]
\label{tutte}
Let $G$ be a graph.
\begin{enumerate}
\item If $\Gamma $and $\Gamma'$ are abelian groups with $|\Gamma| = |\Gamma'|$, then $\Phi(G,\Gamma) = \Phi(G,\Gamma')$.  
\item There exists a polynomial $f$ so that $\Phi(G,\Gamma) = f(n)$ for every abelian group~$\Gamma$ with $|\Gamma| = n$.
\end{enumerate}
\end{theorem}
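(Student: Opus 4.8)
The plan is to prove a single, slightly stronger statement that yields both parts at once: to exhibit an explicit formula for $\Phi(G,\Gamma)$ that depends only on the order $n = |\Gamma|$ and is manifestly a polynomial in $n$. The engine is inclusion--exclusion over the edge set, powered by the observation that counting \emph{all} $\Gamma$-flows (nowhere-zero or not) is easy.

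First I would count all flows. For a spanning subgraph $H$ of $G$, let $\beta(H) = |E(H)| - |V(H)| + c(H)$ be the dimension of its cycle space, where $c(H)$ is the number of connected components. I claim the number of $\Gamma$-flows on $H$ equals $n^{\beta(H)}$, independent of the group structure. To see this, fix a spanning forest $T$ of $H$; the $\beta(H)$ edges in $E(H)\setminus T$ are the chords. The restriction map $\phi \mapsto \phi|_{E(H)\setminus T}$ is a bijection from the flows on $H$ to $\Gamma^{E(H)\setminus T}$: any assignment of group elements to the chords extends uniquely to a flow, as one solves Kirchhoff's law for the tree edges one at a time, working inward from the leaves of $T$ (the value on the tree edge at a leaf is forced by the already-known values of the chords there), with consistency at the roots holding automatically. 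Hence there are exactly $n^{\beta(H)}$ flows, a number depending only on $n$.

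Next I would set up the inclusion--exclusion. For an edge $e$, let $N_e$ be the set of $\Gamma$-flows $\phi$ on $G$ with $\phi(e) = 0$. A flow vanishing on every edge of a set $S \subseteq E(G)$ is exactly a flow on the subgraph $G - S$ obtained by deleting the edges of $S$ (set those edges to $0$ and restrict; conversely extend by zero), so $|\bigcap_{e \in S} N_e| = n^{\beta(G-S)}$. The nowhere-zero flows are precisely those avoiding every $N_e$, and inclusion--exclusion gives
\[ \Phi(G,\Gamma) = \sum_{S \subseteq E(G)} (-1)^{|S|}\, n^{\beta(G-S)}. \]
The right-hand side depends only on $n$ and the combinatorial quantities $\beta(G-S)$, which settles part~(1); and since each summand is $\pm n^{\beta(G-S)}$, it is a polynomial in $n$, settling part~(2) with $f(n) = \sum_S (-1)^{|S|} n^{\beta(G-S)}$.

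The only genuinely delicate point is the flow-counting claim, specifically that the count is the same for \emph{every} abelian group of order $n$ and not merely for cyclic groups. The leaf-peeling bijection handles this cleanly precisely because it never uses anything about $\Gamma$ beyond its group operation: the value forced on each tree edge is an integer ($\pm 1$) combination of already-assigned values, which makes sense in any abelian group. Equivalently, one could invoke that the incidence matrix of a graph is totally unimodular, so its Smith normal form has all elementary divisors equal to $1$ and the flow space therefore has $n^{\beta}$ elements over any $\Gamma$ of order $n$; I prefer the bijective argument since it is self-contained and robust to loops and multiple edges.
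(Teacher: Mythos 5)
Your proof is correct, but it follows a genuinely different route from the paper's. The paper states Theorem~\ref{tutte} as a cited result of Tutte and never proves it directly; the technique it actually develops (to prove the signed generalization, Theorem~\ref{maingroup}) is deletion--contraction induction: Observation~\ref{contdelobs} peels off positive edges, and the irreducible base cases (one-vertex graphs with all edges negative) are handled by the group-theoretic counting formula of Lemma~\ref{abeliancount}. You instead prove the stronger closed-form statement $\Phi(G,\Gamma) = \sum_{S \subseteq E(G)} (-1)^{|S|}\, n^{\beta(G-S)}$ by inclusion--exclusion, resting on the claim that the number of \emph{all} flows on a spanning subgraph is $n^{\beta}$ irrespective of the structure of $\Gamma$. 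Both steps of your argument are sound: the leaf-peeling bijection works (the conditions at the roots hold automatically because the Kirchhoff conditions over a component sum to zero), and flows vanishing on $S$ correspond exactly to flows on $G-S$. What each approach buys is instructive. Yours is shorter, yields an explicit polynomial (essentially the flow polynomial as an evaluation of the Whitney rank generating function), and pinpoints the one place where group structure could matter --- namely your flow-counting claim, which holds because the forced values are $\pm 1$ combinations of chord values (equivalently, total unimodularity of the incidence matrix). But that is precisely the step that breaks for signed graphs: a single negative loop has exactly $2^{\epsilon_2(\Gamma)}$ flows (the solutions of $2x=0$), a quantity not determined by $|\Gamma|$ alone, reflecting the paper's remark that incidence matrices of oriented signed graphs are not totally unimodular and that unimodularity-based methods (Cameron et al.) do not carry over. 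So your inclusion--exclusion argument is arguably the cleanest proof of the ordinary-graph statement, while the paper's deletion--contraction scheme is the one that survives the passage to signed graphs, at the cost of needing Lemma~\ref{abeliancount} to handle the base cases that deletion--contraction cannot reduce.
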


Our interest in this paper is in counting nowhere-zero $\Gamma$-flows in signed graphs, so we proceed with an introduction to this setting.  A \emph{signature} of a graph $G$ is a function $\sigma :
E(G) \rightarrow \{-1,1\}$.  We say that a subgraph $H$ is \emph{positive} if $\prod_{e \in E(H)} \sigma(e) = 1$ and \emph{negative} if this product is $-1$, in particular we call
an edge $e$ \emph{positive} (\emph{negative}) if the graph $e$ induces is positive (negative).  We say that two signatures $\sigma$ and $\sigma'$ are \emph{equivalent} if
the symmetric difference of the negative edges of $\sigma$ and the negative edges of $\sigma'$ is an edge-cut of $G$.  Let us note that two signatures are equivalent if
and only if they give rise to the same set of negative cycles; this instructive exercise was observed by Zaslavsky~\cite{Zaslavsky}.  Observe that if $\sigma$ is a signature and $C$ is an
edge-cut of $G$, then we may form a new signature $\sigma'$ equivalent to $\sigma$ by the following rule:
\[ \sigma'(e) = \left\{ \begin{array}{cl}
	\sigma(e)	&	\mbox{if $e \not\in C$}	\\
	- \sigma(e)	&	\mbox{if $e \in C$.}
	\end{array} \right. \]
So, in particular, for any signature $\sigma$ and a non-loop edge $e$, there is a signature $\sigma'$ equivalent to $\sigma$ with $\sigma'(e) = 1$.  We define a 
\emph{signed graph} to consist of a graph~$G$ together with a signature $\sigma_G$. As suggested by our terminology, we will only be interested in properties of signed
graphs which are invariant under changing to an equivalent signature.

Following Bouchet~\cite{Bouchet} we now introduce a notion of a half-edge so as to orient a signed graph.  For
every graph $G$ we let $H(G)$ be a set of \emph{half edges} obtained from the set of edges $E(G)$ as follows. Each edge $e=uv$ contains two distinct half edges $h$ and $h'$ incident with $u$ and $v$, respectively. Note that if $u=v$, $e$ is a loop containing two half-edges both incident with $u$.
For a half-edge $h \in H(G)$, we let $e_h$~denote the edge of~$G$ that contains~$h$.
To orient
a signed graph $G$ we will equip each half edge with an arrow and direct it either toward or away from its incident vertex.  Formally, we define an \emph{orientation} of a
signed graph $G$ to be a function $\tau : H(G) \rightarrow \{-1,1\}$ with the property that for every edge $e$ containing the half edges $h,h'$ we have
\[ \tau(h) \tau(h') = - \sigma_G(e).  \]
We think of a half edge $h$ with $\tau(h) = 1$ ($\tau(h) = -1$) to be directed toward (away from) its endpoint.  Note that in the case when $\sigma_G$ is identically 1, both arrows on every half edge are oriented consistently, and this aligns with the usual notion of orientation of an (ordinary) graph.

\begin{figure}[h]
  \centering
\includegraphics[width=9cm]{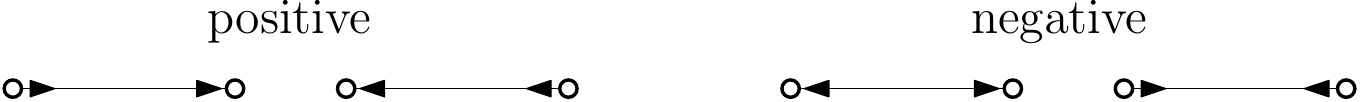}  
\caption{Orientations of edges in a signed graph}
\end{figure}

We define a $\Gamma$-\emph{flow} in such an orientation of a signed graph $G$ to be a function $\phi : E(G) \rightarrow \Gamma$ which obeys the following rule at every vertex $v$
\[  \sum_{ \{h \in H(G) \mid h \sim v \} } \tau(h) \phi(e_h).  \]
As before, we call $\phi$ \emph{nowhere-zero} if $0 \not\in \phi(E(G))$.  Note that in the case when $\sigma_{G}$ is identically 1, this notion agrees with our earlier notion of a (nowhere-zero) flow in an orientation
of a graph.  Also note that, as before, we may obtain a new flow by reversing the orientation of an edge $e$ (i.e., by changing the sign of $\tau(h)$ for both half edges
contained in $e$) and then replacing $\phi(e)$ by its additive inverse.  This new flow is nowhere-zero if and only if the original flow had this
property.  In light of this, we may now define $\Phi(G,\Gamma)$  to be the number of nowhere-zero $\Gamma$-flows in some (and thus every) orientation
of the signed graph~$G$.

As we remarked, we are only interested in properties of signed graphs which are invariant under changing to an equivalent signature, and this is indeed the case for $\Phi(G,\Gamma)$.  To see this, suppose that $\tau$ is an orientation of the signed graph $G$ and that $\phi$ is a nowhere-zero $\Gamma$-flow for this orientation.  Assume that the signature $\sigma'_G$ is obtained from $\sigma_G$ by flipping the sign of every edge in  the edge-cut $\delta(X)$ (here $X \subseteq V(G)$ and $\delta(X)$ is the set of edges with exactly one end in $X$).  Modify the orientation $\tau$ to obtain a new orientation $\tau'$ by switching the sign of $h$ for every half edge incident with a vertex of $X$.  It is straightforward to verify that $\tau'$ is now an orientation of the signed graph given by $G$ and $\sigma_G'$, and $\phi$ is still a $\Gamma$-flow for this new oriented signed graph. 

Beck and Zaslavsky~\cite{BZ06} considered the problem of counting nowhere-zero flows in signed graphs and proved the following analogue of Tutte's Theorem~\ref{tutte} for
groups of odd order.

\begin{theorem}[Beck and Zaslavsky~\cite{BZ06}] 
Let $G$ be a signed graph.
\begin{enumerate}
\item If $\Gamma,\Gamma'$ are abelian groups and $|\Gamma| = |\Gamma'|$ is odd, then $\Phi(G,\Gamma) = \Phi(G,\Gamma')$.  
\item There exists a polynomial $f$ so that for every odd integer $n$, every abelian group $\Gamma$ with $|\Gamma|=n$ satisfies
$f(n) = \Phi(G,\Gamma)$.
\end{enumerate}
\end{theorem}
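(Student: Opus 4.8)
The plan is to first prove a uniform formula for the total number of $\Gamma$-flows (dropping the nowhere-zero requirement) and then recover $\Phi(G,\Gamma)$ by inclusion--exclusion. Write $N(H,\Gamma)$ for the number of all $\Gamma$-flows in an oriented signed graph $H$. Fixing an orientation $\tau$, the flow conditions at the vertices form a homogeneous linear system $D_H x = 0$ over $\Gamma$, where $D_H$ is the signed incidence matrix whose column for an edge $e = uv$ records $\tau$ on the two half-edges of $e$ (so a positive edge contributes the usual $\pm 1$ pair, while a negative edge contributes two entries of equal sign). The first step is to analyse the number of solutions of such a system in $\Gamma^{E(H)}$.

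Second, I would apply the Smith normal form of the integer matrix $D_H$: writing $D_H = U\Lambda V$ with $U,V$ unimodular and $\Lambda$ diagonal with invariant factors $d_1 \mid d_2 \mid \cdots \mid d_r$ (and then zero columns), the substitution $y = Vx$ is a bijection of $\Gamma^{E(H)}$ carrying the solution set onto $\{y : d_i y_i = 0\}$. Hence
\[ N(H,\Gamma) = |\Gamma|^{\,|E(H)| - r}\prod_{i=1}^{r}\bigl|\{g \in \Gamma : d_i g = 0\}\bigr|. \]
The crucial structural input is that the invariant factors of a signed incidence matrix all lie in $\{1,2\}$; this reflects the fact that an unbalanced (odd) cycle supports no nonzero flow over $\mathbb{Z}$, its only obstruction being a factor of two. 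Granting this, when $|\Gamma| = n$ is odd every group element is uniquely $2$-divisible, so each factor $\bigl|\{g : d_i g = 0\}\bigr|$ equals $1$, and therefore $N(H,\Gamma) = n^{\,|E(H)| - r(H)}$, where $r(H)$ is the $\mathbb{Q}$-rank of $D_H$ --- a purely combinatorial quantity (equal to $|V(H)|$ minus the number of balanced components). In particular $N(H,\Gamma)$ depends only on $n$.

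Third, I would pass to the nowhere-zero count by inclusion--exclusion over the edges forced to carry $0$: since a flow vanishing on a set $S \subseteq E(G)$ is exactly a flow of the deleted graph $G - S$,
\[ \Phi(G,\Gamma) = \sum_{S \subseteq E(G)} (-1)^{|S|}\, N(G - S,\Gamma) = \sum_{S \subseteq E(G)} (-1)^{|S|}\, n^{\,|E(G)| - |S| - r(G-S)}. \]
The right-hand side is a single polynomial in $n$ whose coefficients are determined by the ranks $r(G-S)$, and it does not otherwise depend on $\Gamma$. Taking $f$ to be this polynomial establishes both conclusions at once.

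Fourth, the main obstacle is the structural lemma that the invariant factors of $D_H$ belong to $\{1,2\}$, together with the accompanying rank computation $r(H) = |V(H)| - (\text{number of balanced components})$. I expect to prove the rank statement by selecting, in each component, a spanning structure --- a spanning tree in a balanced component, and a spanning tree plus one edge closing a negative cycle in an unbalanced one --- and checking that these columns are independent over $\mathbb{Q}$ while every remaining column depends on them. The bound on the invariant factors should follow by showing that the torsion of the cokernel of $D_H$ is annihilated by $2$, the only place a factor of $2$ entering being the closing of a negative cycle. Some care will be needed with loops, parallel edges, and the half-edge bookkeeping, but I anticipate no genuinely new difficulty there.
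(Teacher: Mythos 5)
Your proposal is correct, but it takes a genuinely different route from the paper. The paper never argues this statement directly: it obtains it as the $d=0$ case of Theorem~\ref{maingroup}, whose proof is a deletion--contraction induction (Observation~\ref{contdelobs}) that strips positive loops, contracts/deletes a positive non-loop edge (after switching to make its signature $1$), and bottoms out at one-vertex graphs whose edges are all negative loops, where the group-theoretic Lemma~\ref{abeliancount} counts solutions of $2x_1+\dots+2x_t=0$. You instead work with the signed incidence matrix: Smith normal form counts all flows, the structural lemma that the invariant factors lie in $\{1,2\}$ with rank $|V(H)|$ minus the number of balanced components shows this count equals $n^{|E(H)|-r(H)}$ when $n$ is odd (since $2$ is then invertible in $\Gamma$), and M\"obius inversion over edge subsets recovers the nowhere-zero count. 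All the steps are sound: unimodular matrices act bijectively on $\Gamma^{E}$ and injectively on $\Gamma^{V}$, so the substitution is legitimate; the inclusion--exclusion identity is the standard one; and the structural lemma is true (it is essentially Zaslavsky's computation) and is completed exactly along the lines you sketch --- switching multiplies rows by $\pm 1$ and so preserves minors up to sign, a spanning tree made all-positive yields an $(n-1)\times(n-1)$ minor equal to $\pm 1$, the tree plus a negative-cycle-closing edge yields an $n\times n$ minor equal to $\pm 2$, and block-diagonality handles disconnected graphs, with positive loops giving zero columns and negative loops giving entries $\pm 2$. What your route buys is an explicit closed form
\[ \Phi(G,\Gamma) \;=\; \sum_{S \subseteq E(G)} (-1)^{|S|}\, n^{\,|E(G)|-|S|-r(G\setminus S)} \]
for the polynomial and a transparent reason why odd order suffices; moreover, if you keep the factor $\bigl|\{g : 2g=0\}\bigr| = 2^{\epsilon_2(\Gamma)}$ instead of discarding it, the same computation yields the paper's full Theorem~\ref{maingroup}. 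What the paper's route buys is economy of means: no Smith normal form or rank theory at all, just deletion--contraction and one counting lemma, which keeps the argument short and self-contained.
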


The purpose of this note is to extend the above theorem to allow for groups of even order by incorporating another parameter.  For any finite group $\Gamma$ we define
$\epsilon_2(\Gamma)$ to be the largest integer $d$ so that $\Gamma$ contains a subgroup isomorphic to $\Z_2^d$ (here $\Z_2 = \Z/2\Z$).

\begin{theorem}
\label{maingroup}
Let $G$ be a signed graph and let $d \ge 0$.  
\begin{enumerate}
\item If $\Gamma$ and $\Gamma'$ are abelian groups with $|\Gamma| = |\Gamma'|$ and $\epsilon_2(\Gamma) = \epsilon_2(\Gamma')$, then $\Phi(G,\Gamma) = \Phi(G,\Gamma')$.  
\item For every nonnegative integer $d$, there exists a polynomial $f_d$ so that $\Phi(G,\Gamma) = f_d(n)$ for every abelian group $\Gamma$ with $\epsilon(\Gamma) = d$
  and $|\Gamma| = 2^dn$.
\end{enumerate}
\end{theorem}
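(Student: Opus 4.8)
The plan is to reduce the counting of nowhere-zero flows to the counting of \emph{all} $\Gamma$-flows, and then to show that the latter count depends only on $|\Gamma|$ and $\epsilon_2(\Gamma)$ through an explicit monomial formula. Fix an orientation $\tau$ of $G$ and write $F(H,\Gamma)$ for the number of all $\Gamma$-flows in a signed graph $H$ carrying the inherited orientation. A flow $\phi$ is nowhere-zero exactly when its zero set $\{e : \phi(e)=0\}$ is empty, so M\"obius inversion over these zero sets yields
\[ \Phi(G,\Gamma) \;=\; \sum_{T \subseteq E(G)} (-1)^{|T|}\, F(G\setminus T,\Gamma), \]
since the flows of $G$ vanishing on $T$ are, by extension and restriction, in bijection with the flows of the deletion $G\setminus T$. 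Thus it suffices to prove that $F(H,\Gamma)$ is determined by $|\Gamma|$ and $\epsilon_2(\Gamma)$ for every signed graph $H$: part~1 is then immediate, and part~2 follows by substituting the resulting formula into the displayed sum.

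To compute $F(H,\Gamma)$ I would encode the flow conditions by the signed incidence matrix $M \in \Z^{V(H)\times E(H)}$ determined by $\tau$, so that the $\Gamma$-flows are exactly the kernel of the induced homomorphism $\bar M \colon \Gamma^{E(H)} \to \Gamma^{V(H)}$. Passing to the Smith normal form $M = UDV$ over $\Z$ with rank $r$ and invariant factors $d_1 \mid \dots \mid d_r$, the unimodular factors $U,V$ act as automorphisms of $\Gamma^{V(H)}$ and $\Gamma^{E(H)}$, so
\[ F(H,\Gamma) \;=\; |\ker \bar M| \;=\; |\Gamma|^{\,|E(H)|-r}\prod_{i=1}^{r} \bigl|\{x\in\Gamma : d_i x = 0\}\bigr|. \]
Everything then hinges on the following structural claim, which I expect to be the heart and the main obstacle of the argument: \emph{every invariant factor of a signed incidence matrix is either $1$ or $2$.} Granting this, and writing $2^{\epsilon_2(\Gamma)}$ for the number of solutions of $2x=0$ in $\Gamma$, the product collapses to $|\Gamma|^{|E(H)|-r}\,\bigl(2^{\epsilon_2(\Gamma)}\bigr)^{r_2}$, where $r_2$ is the number of invariant factors equal to $2$; in particular $F(H,\Gamma)$ depends only on $|\Gamma|$ and $\epsilon_2(\Gamma)$.

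To establish the claim I would argue componentwise, since $M$ is block diagonal over the components of $H$, and use the freedom to replace $\sigma_H$ by an equivalent signature to switch each component so that a chosen spanning tree is all-positive. A \emph{balanced} component then becomes an ordinary all-positive graph, whose incidence matrix is totally unimodular, contributing rank $|V|-1$ and only invariant factors equal to $1$. For an \emph{unbalanced} component the rank rises to $|V|$, and the key computation is that its maximal minors lie in $\{0,\pm 2,\pm 4,\dots\}$: a nonsingular choice of $|V|$ columns is a spanning subgraph each of whose pieces is unicyclic with a negative cycle, and a cofactor expansion along the positive tree shows the absolute value of the determinant is $2^{k}$, where $k$ counts those pieces (a positive cycle would instead force a linear dependence and a vanishing minor). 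Every maximal minor is therefore even, while a connected such subgraph gives $\pm 2$, so the greatest common divisors of the $(|V|-1)$st and $|V|$th minors are $1$ and $2$ respectively, and the invariant factors of an unbalanced component are $1,\dots,1,2$. Consequently $r_2$ equals the number $b$ of unbalanced components of $H$, and $F(H,\Gamma) = |\Gamma|^{\,|E|-|V|+c-b}\,\bigl(2^{\epsilon_2(\Gamma)}\bigr)^{b}$ with $c$ the number of components.

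Finally, writing $|\Gamma| = 2^{d} n$ with $d = \epsilon_2(\Gamma)$ and substituting this monomial for each $F(G\setminus T,\Gamma)$ into the M\"obius sum, every summand becomes $(-1)^{|T|}\,2^{d(|E\setminus T|-r_T+b_T)}\,n^{\,|E\setminus T|-r_T}$, with exponents depending only on $T$ and $G$. For fixed $d$ this is a polynomial in $n$ with integer coefficients, the desired $f_d$, which proves part~2; and since the full expression is manifestly a function of $|\Gamma|$ and $d$ alone, part~1 follows as well. The one delicate point to get right is the parity and sign bookkeeping in the determinant computation for unbalanced components, in particular the verification that it is precisely the negative cycles (and not positive ones) that produce the factor $2$.
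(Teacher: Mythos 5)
Your proposal is correct, but it takes a genuinely different route from the paper. The paper follows Tutte's original scheme: a deletion--contraction induction (their Observation on positive edges) whose base cases are one-vertex graphs with all edges negative, handled by a group-theoretic lemma counting nonzero solutions of $2x_1 + \dots + 2x_t = 0$ via the doubling homomorphism $x \mapsto 2x$. You instead do inclusion--exclusion over zero sets to reduce to counting \emph{all} flows, and then compute that count by Smith normal form of the signed incidence matrix. Your key structural claim --- that the invariant factors of a signed incidence matrix are all $1$ except for one $2$ per unbalanced component, because nonsingular maximal square submatrices correspond to spanning subgraphs whose components are unicyclic with negative cycles and have determinant $\pm 2^k$ --- is a known fact about signed-graphic matrices (essentially due to Zaslavsky), and your sketch of it (switching a spanning tree positive, block decomposition by components, cofactor expansion, positive cycles forcing singularity) is sound; the coset-counting step $|\ker \bar M| = |\Gamma|^{|E|-r}\prod_i |\{x : d_i x = 0\}|$ and the identification of the $2$-torsion subgroup with $\Z_2^{\epsilon_2(\Gamma)}$ are also correct. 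What each approach buys: the paper's argument is shorter, elementary, and self-contained, with all the group theory isolated in one lemma; yours yields an explicit Whitney-rank-type expansion
\[ \Phi(G,\Gamma) \;=\; \sum_{T \subseteq E(G)} (-1)^{|T|}\, 2^{\,d(|E \setminus T| - |V| + c_T)}\, n^{\,|E\setminus T| - |V| + c_T - b_T}, \]
which makes polynomiality in $n$ manifest, exhibits the coefficients of $f_d$ combinatorially (in terms of component and balance counts of edge-deleted subgraphs), and pinpoints exactly how the failure of total unimodularity --- the obstacle the paper cites when explaining why the Cameron--Jackson--Rudd machinery does not apply --- enters: the single invariant factor $2$ per unbalanced component is precisely what forces the dependence on $\epsilon_2(\Gamma)$. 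Your formula also recovers the paper's motivating example directly: a single negative loop gives $2^d - 1$. The remaining bookkeeping you flag (signs in the unicyclic determinant computation, and checking that the gcd of maximal minors is exactly $2$ because a connected unbalanced graph contains a spanning connected unicyclic subgraph with a negative cycle) is routine and poses no obstruction.
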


The proof of the above theorem is a straightforward adaptation of Tutte's original method, so it may seem surprising it was not proved earlier.  The cause of this may be
some confusion over whether or not it was already done.  The paper by Beck and Zaslavsky~\cite{BZ06} includes a footnote with the following comment: ``Counting of flows in groups of
even order has been completely resolved by Cameron et al.''.   This refers to an interesting paper of Cameron, Jackson, and Rudd~\cite{CJR} which concerns problems such as counting
the number of orbits of nowhere-zero flows under a group action.  However, the methods developed in this paper only apply to counting nowhere-zero flows in (ordinary) graphs
for the reason that the incidence matrix of an oriented graph is totally unimodular.  Since the corresponding incidence matrices of oriented signed graphs are generally
not totally unimodular (and not equivalent to such matrices under elementary row and column operations), our result does not follow from Cameron et al.  

Before giving the proof of our theorem, let us pause to make one further comment about nowhere-zero flows in signed graphs which consist of a single loop edge~$e$. For a
loop edge $e$ with signature $1$ we may obtain a nowhere-zero flow by assigning any nonzero value $x$ to the edge $e$.  So, two groups $\Gamma$ and $\Gamma'$ will have
the same number of nowhere-zero flows for this graph if and only if $|\Gamma| = |\Gamma'|$.  If, on the other hand, our graph consists of a single loop edge $e$ which is
negative, then the number of nowhere-zero $\Gamma$-flows in this graph will be precisely the number of nonzero group elements $y$ for which $2y = 0$ (i.e., the number of
elements of order~2).  All elements of order~2 form (together with the zero element) a subgroup isomorphic to~$\Z_2^{\epsilon_2(\Gamma)}$, thus 
this number is precisely $2^{{\epsilon}_2(\Gamma)} - 1$.  So, in order for two groups $\Gamma$ and $\Gamma'$ to have the same number of
nowhere-zero flows on this graph, they
must satisfy $\epsilon_2(\Gamma) = \epsilon_2(\Gamma')$. By our main theorem, two groups $\Gamma$ and $\Gamma'$ will satisfy $\Phi(G,\Gamma) = \Phi(G,\Gamma')$ for every signed graph $G$ if and only if this holds for every one edge graph.  This statement is in precise analogy with the situation for flows in ordinary graphs.  

\begin{figure}[h]
  \centering
\includegraphics[width=3cm]{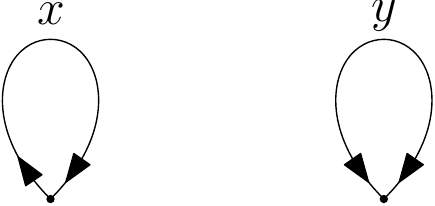}  
\caption{Two graphs that determine $\Phi(G,\Gamma)$ for every other graph~$G$.} 
\end{figure}

We close the introduction by mentioning related results about the number of integer flows. 
Tutte~\cite{Tutte49} defined a nowhere-zero $n$-flow to be a $\Z$-valued flow that only uses values~$k$
with $0 < |k| < n$. Surprisingly, a graph has a nowhere-zero $n$-flow if and only if it has 
a nowhere-zero $\Z_n$-flow. Let us use $\Phi(G,n)$ to denote the number of nowhere-zero $n$-flows on~$G$. 
While $\Phi(G,n)$ and $\Phi(G,\Z_n)$ are either both zero or both nonzero, the actual values differ. 
An analogical statement to the second part of Theorem~\ref{tutte} is again true, by a result of
Kochol~\cite{Kochol}; that is, $\Phi(G,n)$ is a polynomial in~$n$. His result has already been extended for bidirected graphs. 
Beck and Zaslavsky~\cite{BZ06} prove that for a signed graph~$G$, $\Phi (G,n)$~is a quasipolynomial of period 1 or 2; that is, 
there are polynomials~$p_0$ and~$p_1$ such that $\Phi(G,n)$ is equal to~$p_0(n)$ for even~$n$ and to~$p_1(n)$ for odd~$n$. 
Both the Kochol's and the Beck and Zaslavsky's result is proved by an illustrative application of Ehrhart's theorem~\cite{Ehrhart, Sam}. 

\section{The proof}
\label{sec:group}

The proof of our main theorem requires the following lemma about counting certain solutions to an equation in an abelian group.   

\begin{lemma}
\label{abeliancount}
Let $\Gamma$ be an abelian group with $\epsilon_2(\Gamma) = d$ and $|\Gamma| = 2^d n$. Then the number of solutions to $2x_1 + \dots + 2x_t = 0$ with 
$x_1, \ldots, x_t \in \Gamma \setminus \{ 0 \}$ is given by the formula
\[ 
  \sum_{s=0}^{t} (2^d)^s (2^d-1)^{t-s} {t \choose s} \sum_{i=1}^{s-1} (-1)^{i-1} (n-1)^{s-i} \,. 
\]
\end{lemma}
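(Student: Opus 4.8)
The plan is to reduce the count to an equation inside the subgroup $2\Gamma = \{2x : x \in \Gamma\}$ via the substitution $y_i := 2x_i$. The doubling map $x \mapsto 2x$ is a homomorphism $\Gamma \to \Gamma$ whose kernel is exactly the $2$-torsion subgroup $T = \{x : 2x = 0\}$. Since the elements of order dividing $2$ form a copy of $\Z_2^{\epsilon_2(\Gamma)} = \Z_2^d$, we have $|T| = 2^d$ and hence $|2\Gamma| = |\Gamma|/|T| = n$. For a fixed $y \in 2\Gamma$ the solutions of $2x = y$ form a coset of $T$, so there are $2^d$ of them, and this coset contains $0$ precisely when $y = 0$. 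Therefore the number of \emph{nonzero} preimages of $y$ is $2^d - 1$ when $y = 0$ and $2^d$ when $y \neq 0$. This is the only point at which the fine structure of $\Gamma$ could enter, and it already reveals that the answer depends on $\Gamma$ solely through $d$ and $n$.

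Next I would stratify the solutions by $s := |\{i : y_i \neq 0\}|$, the number of indices whose doubled value is nonzero. Choosing which $s$ of the $t$ positions are nonzero contributes a factor $\binom{t}{s}$; the $t-s$ positions with $y_i = 0$ contribute $(2^d-1)^{t-s}$ (their preimage counts) and impose no constraint on the sum, while the $s$ positions with $y_i \neq 0$ contribute $(2^d)^s$ and must satisfy $\sum_i y_i = 0$. Writing $A_s$ for the number of tuples $(z_1,\dots,z_s) \in (2\Gamma\setminus\{0\})^s$ with $z_1 + \dots + z_s = 0$, the total thus equals $\sum_{s=0}^{t} \binom{t}{s} (2^d)^s (2^d-1)^{t-s} A_s$, which is precisely the outer shape of the claimed formula; it remains to identify $A_s$ with the inner sum. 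Observe that $A_s$ is again a count of nowhere-zero solutions of a homogeneous equation in an abelian group, and that its only relevant parameter is the order $|2\Gamma| = n$.

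Finally I would evaluate $A_s$ by a one-step recursion. Fix the first $s-1$ entries to be arbitrary nonzero elements of $2\Gamma$ ($(n-1)^{s-1}$ choices); then $z_s = -(z_1 + \dots + z_{s-1})$ is forced, and it is nonzero exactly when the partial sum is nonzero, i.e.\ exactly when $(z_1,\dots,z_{s-1})$ is \emph{not} one of the $A_{s-1}$ nowhere-zero solutions of length $s-1$. This yields $A_s = (n-1)^{s-1} - A_{s-1}$ with $A_1 = 0$, which unwinds to $A_s = \sum_{i=1}^{s-1}(-1)^{i-1}(n-1)^{s-i}$ for $s \ge 1$, matching the inner sum. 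Substituting back gives the stated expression.

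The \textbf{main obstacle} is conceptual rather than computational: one must recognize that the two places where $\Gamma$ could a priori matter---the nonzero-preimage counts under doubling and the count $A_s$---both collapse to functions of $d$ and $n$ alone, which is what makes the final quantity well defined. Care is also needed at the degenerate stratum $s = 0$ (all $x_i$ nonzero involutions), where the empty-sum convention forces $A_0 = 1$ and the contribution is $(2^d-1)^t$; this term is not captured by the inner alternating sum, whose closed form is valid only for $s \ge 1$, and so it must be recorded separately. As a consistency check, a character-sum evaluation of $A_s$ produces the same recursion, and summing the whole expression directly collapses it to $\tfrac{1}{n}\bigl[(2^dn-1)^t + (-1)^t(n-1)\bigr]$, which visibly is a polynomial in $n$.
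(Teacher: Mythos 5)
Your proof is correct and follows essentially the same route as the paper's: the doubling homomorphism $\psi(x)=2x$ with kernel $\Z_2^d$ and image of order $n$, the preimage counts $2^d$ versus $2^d-1$, the stratification by the number $s$ of nonzero images, and your recursion $A_s=(n-1)^{s-1}-A_{s-1}$ with $A_1=0$ is exactly the paper's induction phrased as a recursion. What deserves highlighting is your remark about the stratum $s=0$: it is not merely a point of care, it exposes a genuine error in the formula as stated. Under the standard empty-sum convention the $s=0$ term of the displayed formula vanishes, yet the true contribution of that stratum is $(2^d-1)^t\cdot A_0=(2^d-1)^t$ (all $x_i$ nonzero elements of the $2$-torsion subgroup); the paper's own proof commits the same oversight when it asserts that the number of solutions with exactly $s$ nonzero terms is $\binom{t}{s}\sum_{i=1}^{s-1}(-1)^{i-1}(n-1)^{s-i}$ ``for every $0\le s\le t$'', which fails at $s=0$ where the correct count is $1$, not $0$. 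Concretely, for $\Gamma=\Z_2$ and $t=1$ the stated formula gives $0$ although $x_1=1$ solves $2x_1=0$; for $\Gamma=\Z_4$ and $t=2$ it gives $4$ although there are $5$ solutions. So the lemma as printed is short by exactly $(2^d-1)^t$ whenever $d\ge 1$, and your version, with the $s=0$ term recorded separately, is the correct statement. The discrepancy is harmless for Theorem~\ref{maingroup}: the corrected count still depends on $\Gamma$ only through $d$ and $n$ and is a polynomial in $n$ for fixed $d$ and $t$, which is all the main proof uses.
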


\begin{proof}
We claim that for every abelian group of order $m$, the number of solutions to $x_1 + \dots + x_t = 0$ with $x_1, \ldots, x_t \neq 0$ is given by the formula
\[
  \sum_{i=1}^{t-1} (-1)^{i-1} (m-1)^{t-i} \,. 
\]
We prove this by induction on $t$.  The base case $t=1$ holds trivially.  For the inductive step, we may assume $t \ge 2$.  The total number of solutions to the given
equation for which $x_1, \ldots, x_{t-1}$ are nonzero, but $x_{t}$ is permitted to have any value is exactly $(m-1)^{t-1}$ since we may choose the nonzero terms $x_1,
\ldots, x_{t-1}$ arbitrarily and then set $x_{t} = - \sum_{i=1}^{t-1} x_i$ to obtain a solution.  By induction, there are exactly $\sum_{i=1}^{t-2} (-1)^{i-1}
(m-1)^{t-1-i}$ of these solutions for which $x_{t} = 0$.  We conclude that the number of solutions with all variables nonzero is 
\[ (m-1)^{t-1} - \sum_{i=1}^{t-2} (-1)^{i-1} (m-1)^{t-1-i} = \sum_{i=1}^{t-1} (-1)^{i-1} (m-1)^{t-i} \]
as claimed.  

Now, to prove the lemma, we consider the group homomorphism $\psi : \Gamma \rightarrow \Gamma$ given by the rule $\psi (x) = x+x$.  Note that the kernel of $\psi$,
denoted $ker(\psi)$, is isomorphic to $\Z_2^d$.  Now $x_1, \ldots, x_t$ satisfy $2x_1 + \dots + 2x_t = 0$ if and only if 
$\psi(x_1), \ldots, \psi(x_t)$ satisfy $\psi(x_1) + \dots + \psi(x_t) = 0$.  
So, to count the number of solutions to $2x_1 + \dots + 2x_t = 0$ in $\Gamma$ with all variables nonzero, we may count all possible solutions
to $y_1 + \dots + y_t = 0$ within the group $\psi(\Gamma)$ and then, for each such solution, count the number of nonzero sequences $x_1, \ldots, x_t$ in $\Gamma$ with $
\psi(x_i) = y_i$.  For every $y_i \in \psi(\Gamma)$, the pre-image $\psi^{-1}(y_i)$ is a coset of $ker(\psi)$.  So the number of nonzero elements $x_i$ with $\psi(x_i) =
y_i$ will equal $2^d$ if $y_i \neq 0$ and $2^d - 1$ if $y_i = 0$.  Now we will combine this with the claim proved above.  For every $0 \le s \le t$, the number of
solutions to $y_1 + \dots + y_t=0$ in the group $\psi(\Gamma)$ with exactly $s$ nonzero terms is given by 
\[ 
  {t \choose s} \sum_{i=1}^{s-1} (-1)^{i-1} (n-1)^{s-i} \,.  
\] 
Each such solution will be the image of exactly $(2^d)^s (2^d-1)^{t-s}$ nonzero sequences $x_1, \ldots, x_t \in \Gamma$.  Summing over all $s$ gives the desired formula.  
\end{proof}

We also require the usual contraction-deletion formula for counting nowhere-zero flows.  

\begin{observation}
\label{contdelobs}
Let $G$ be an oriented signed graph and let $e \in E(G)$ satisfy $\sigma_G(e) = 1$.  
\begin{enumerate}
\item If $e$ is a loop edge, then $\Phi(G,\Gamma) = (|\Gamma|-1) \Phi({G \setminus e},\Gamma)$.
\item If $e$ is not a loop edge, then $\Phi(G,\Gamma) = \Phi({G/e},\Gamma) - \Phi({G \setminus e},\Gamma)$.
\end{enumerate}
\end{observation}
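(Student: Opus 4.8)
The plan is to exploit the hypothesis $\sigma_G(e) = 1$, which forces the two half edges $h, h'$ of $e$ to satisfy $\tau(h)\tau(h') = -1$; that is, one of the two arrows points toward its endpoint and the other points away, exactly as for an ordinary oriented edge. This single structural fact is what lets the familiar Tutte contraction--deletion argument go through unchanged in the signed setting, and it is the only place where positivity of $e$ is used.

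For part (1), let $e$ be a positive loop at a vertex $u$ with half edges $h, h'$. Since $\tau(h) = -\tau(h')$, the total contribution of $e$ to Kirchhoff's law at $u$ is $\tau(h)\phi(e) + \tau(h')\phi(e) = 0$, independent of the value $\phi(e)$. Hence the flow equations of $G$ are precisely those of $G \setminus e$ together with no constraint on $\phi(e)$. A nowhere-zero $\Gamma$-flow of $G$ therefore consists of a nowhere-zero $\Gamma$-flow of $G \setminus e$ together with an arbitrary nonzero value for $\phi(e)$, and these $|\Gamma| - 1$ choices are independent, yielding $\Phi(G,\Gamma) = (|\Gamma| - 1)\Phi(G \setminus e, \Gamma)$.

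For part (2), let $e = uv$ be a positive non-loop edge with half edges $h$ (at $u$) and $h'$ (at $v$), so that $\tau(h) = -\tau(h')$. I would introduce the set $N$ of all $\Gamma$-flows of $G$ that are nowhere-zero on every edge except possibly $e$, leaving $\phi(e)$ unconstrained. Partitioning $N$ according to whether $\phi(e) = 0$ gives two classes: the flows with $\phi(e) \neq 0$ are exactly the nowhere-zero flows of $G$, counted by $\Phi(G,\Gamma)$, while the flows with $\phi(e) = 0$ are, after dropping $e$, exactly the nowhere-zero flows of $G \setminus e$, counted by $\Phi(G \setminus e, \Gamma)$. Thus $|N| = \Phi(G,\Gamma) + \Phi(G \setminus e, \Gamma)$.

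The main step, and the one requiring care, is to show $|N| = \Phi(G/e, \Gamma)$ by a bijection with the nowhere-zero flows of $G/e$. Restricting a flow in $N$ to $E(G) \setminus \{e\} = E(G/e)$ produces a candidate flow on $G/e$; one checks it is nowhere-zero and satisfies Kirchhoff's law at the contracted vertex $w$, because summing the laws at $u$ and at $v$ makes the two $e$-terms $\tau(h)\phi(e)$ and $\tau(h')\phi(e)$ cancel (here is precisely where $\tau(h) = -\tau(h')$ is essential), leaving exactly the law at $w$. Conversely, given a nowhere-zero flow of $G/e$ whose non-$e$ half edges contribute $A$ at $u$ and $B$ at $v$, the law at $w$ reads $A + B = 0$; solving $A + \tau(h)\phi(e) = 0$ determines $\phi(e)$ uniquely, and the relations $A = -B$ together with $\tau(h) = -\tau(h')$ guarantee that this same value also satisfies $B + \tau(h')\phi(e) = 0$, so both vertex laws hold and the extension lies in $N$. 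These two maps are mutually inverse, so $\Phi(G/e,\Gamma) = \Phi(G,\Gamma) + \Phi(G \setminus e,\Gamma)$, which rearranges to the claimed identity. The only genuine obstacle is verifying the consistency of the recovered value $\phi(e)$ in the converse direction, and the positivity of $e$ is exactly what resolves it.
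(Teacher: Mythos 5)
Your proof is correct and follows essentially the same route as the paper: part (1) by freely assigning a nonzero value to the positive loop, and part (2) by identifying $\Phi(G/e,\Gamma)$ with the count of flows in $G$ that are nowhere-zero except possibly at $e$, which splits as $\Phi(G,\Gamma) + \Phi(G\setminus e,\Gamma)$. You merely spell out the ``elementary considerations'' (the cancellation $\tau(h)+\tau(h')=0$ and the uniqueness and consistency of the recovered value $\phi(e)$) that the paper leaves implicit.
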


\begin{proof} The first part follows from the observation that every nowhere-zero flow in $G$ is obtained from a nowhere-zero flow in $G \setminus e$ by choosing an arbitrary nonzero value for $e$.  The second part follows from the usual contraction-deletion formula for flows.  Suppose $\phi$ is a nowhere-zero flow in $G / e$, and return to the original graph $G$ by uncontracting $e$.  It follows from elementary considerations that there is a unique value $\phi(e)$ we can assign to $e$ so that $\phi$ is a flow.  It follows that $\Phi(G/e,\Gamma)$ is precisely the number of $\Gamma$-flows in $G$ for which all edges except possibly $e$ are nonzero.  This latter count is exactly $\Phi(G,\Gamma) + \Phi(G \setminus e,\Gamma)$ and this completes the proof.
\end{proof}

Equipped with these lemmas, we are ready to prove our main theorem about counting group-valued flows.

\bigskip

\begin{proof}[Proof of Theorem~\ref{maingroup}]
For the first part, we proceed by induction on $|E(G)|$.  Our base cases will consist of one vertex graphs $G$ for which every edge has signature $-1$.  In this case we
may orient $G$ so that every half-edge is directed toward its endpoint.  If the edges are $e_1, \ldots, e_t$, then to find a nowhere-zero flow we need to assign each 
edge~$e_i$ a nonzero value~$x_i$ so that $2x_1 + \dots + 2x_t = 0$.  
By Lemma~\ref{abeliancount}, the number of ways to do this is the same for $\Gamma$ and $\Gamma'$.  

For the inductive step, we may assume $G$ is connected, as otherwise the result follows by applying induction to each component.  If $G$ has a loop edge $e$ with
$\sigma_G(e) = 1$, then the result follows from the previous lemma and induction on $G \setminus e$.  Otherwise $G$ must have a non-loop edge $e$.  By possibly switching
to an equivalent signature, we may assume that $\sigma_G(e) = 1$.  Now our result follows from the previous lemma and induction on $G \setminus e$ and $G / e$.  

The second part of the theorem follows by a very similar argument.  In the base case when $G$ is a one vertex graph in which every edge has signature $-1$, the desired
polynomial is given by Lemma~\ref{abeliancount}.  For the inductive step, we may assume $G$ is connected, as otherwise the result follows by applying induction to each
component and taking the product of these polynomials.  If we are not in the base case, then $G$ must either have a loop edge with signature $1$ or a non-loop edge $e$
which we may assume has signature $1$.  In either case, Observation~\ref{contdelobs} and induction yield the desired result.  
\end{proof}

\bibliographystyle{rs-amsplain}
\bibliography{counting_signed_graph_flows}

\end{document}